\newtheorem{thm}{Theorem}[section]
\newtheorem{lem}[thm]{Lemma}
\newtheorem{pro}[thm]{Proposition}
\newtheorem{defi}[thm]{Definition}
\newtheorem{rem}[thm]{Remark}
\newtheorem{exa}[thm]{Example}
\newenvironment{proof}{\noindent \textbf{{Proof.}} \sf}
\def\B{{\mathcal B}}
\def\F{{\mathcal F}}
\def\lim{\mathop{\rm lim}\nolimits}
\def\Hom{\mathop{\sf Hom}\nolimits}
\def\Ext{\mathop{\sf Ext}\nolimits}
\begin{document}

\sf

\title{A finite dimensional algebra of the diagram of a knot}
\author{Claude Cibils}

\date{}

\maketitle

\begin{abstract}
To a regular projection of a knot we associate a finite dimensional non-commutative associative algebra which is self-injective and special biserial.

\end{abstract}

\noindent 2010 MSC: 16S99  57M25

\section{\sf Introduction}

Let $D$ be an oriented knot diagram, that is a regular projection of a knot to the plane where each crossing is given with the information of which
part of the knot under-crosses the other one; this appears in $D$ by interrupting the part of the diagram which is under-crossing.

In this note we associate to $D$ a finite dimensional algebra over a field $k$,  presented
by a quiver with relations deduced from the knot. In other words we associate to $D$ a $k$-category given by a presentation; it has
a finite number of objects corresponding to the crosses, and  finite dimensional vector spaces of morphisms.
The algebra  of the diagram is the direct sum of all the morphisms of the $k$-category with product induced
by the composition of the category or, equivalently, is the path algebra of the quiver modulo the two-sided ideal generated by the relations.

This algebra of the knot projection is Morita reduced,  special biserial and self-injective.
As such it is not invariant under Reidemeister moves since its dimension changes.

The main purpose of this note is the description of  this family of algebras. They can be of interest in order to test homological conjectures
or to analyze  representation theory aspects. On the other hand one may expect information on the knot via the algebra of a diagram,
 as well as  links between knot theory and the representation theory of finite dimensional algebras.

On some examples this algebra happens to admit a connected grading by the fundamental group of the knot.

Acknowledgements: I thank the organizers and the lecturers of the CIMPA research school "Symplectic Geometry and Geometric Topology"
 Mekn\`{e}s (Morocco) in 2012, which was inspiration to the content of this note.

\section{Quiver and relations of an oriented knot diagram}

A quiver is an oriented graph $Q$ given by two finite sets, $Q_0$ (vertices) and $Q_1$ (arrows) and two maps $s,t : Q_1\to Q_0$
assigning to each arrow $a$ the  source and target vertices $s(a)$ and $t(a)$.
Consider the vector space $kQ$  with basis the set of oriented paths of $Q$ including the trivial ones given by the vertices.
The quiver path algebra is $kQ$  equipped with by the product on paths induced by their concatenation if it can be performed and $0$ otherwise.
This way the vertices provide a complete set of primitive orthogonal idempotents. Note that $Q_0$ needs to be non empty in order to get an algebra.

Consider $F$ the two sided ideal generated by the arrows. Clearly $kQ/F\simeq kQ_0$ where $kQ_0$
is the commutative semi-simple algebra of the set $Q_0$. In case $Q$ has no oriented cycles $F$ is the Jacobson radical of $kQ$.
A well known theorem of P. Gabriel states that for each  finite dimensional Morita reduced algebra
$\Lambda$ over an algebraically closed field there is a unique quiver $Q_\Lambda$ such that $\Lambda$ is isomorphic to a
quotient $kQ_\Lambda/I$ where $I$ is an admissible two-sided ideal, that is  $I$ contains $F^2$ and is
contained in  $F^n$ for some positive integer $n$. Note that $I$ is not unique in general, while $Q_\Lambda$
 is the $\Ext$ quiver with vertices the iso-classes of simple modules and as many arrows between two simples
 than the dimension of $\Ext_\Lambda^1$ between them.

Let $D$ be an oriented knot diagram.
An \textbf{arc} of $D$ is obtained by following the diagram according to its orientation from an under-crossing to the next one, in other words
 an arc is a connected
component of $D$. A \textbf{segment} of the diagram is obtained by following the diagram according to its orientation, from one crossing to a crossing
with no crossings in-between.

\begin{defi}
\textbf{The quiver $\mathbf {Q_D}$} has set of vertices the crossings of $D$. Note that this requires to have at least one crossing in $D$ in order
that  $(Q_D)_0\neq \emptyset$.
Each segment provides an arrow having source and target the corresponding crossings.
We say that the source of a an arrow is negative or positive according if the segment starts respectively
by under-crossing or by over-crossing. Similarly the target vertex of an arrow is also negative or positive.
\end{defi}

\begin{rem}\label{fund}
There are two specific oriented cycles at each vertex $e$ as follows:
\begin{enumerate}
\item
 $\alpha_e$ starts at $e$ by the arrow with
positive source $e$  and successive arrows by browsing the oriented diagram until reaching the arrow having target $e$ with negative sign,
\item Similarly, $\beta_e$  starts at $e$ but by the negative source arrow and ends with the arrow with positive target $e$.
\end{enumerate}
\end{rem}

\begin{defi}
 The  two \textbf{fundamental} \textbf{cycles} at $e$ are as follows: the over-crossing  (or positive) one  $\gamma_e^+=\beta_e \alpha_e$
  and the under-crossing
  (or negative) one  $\gamma_e^-=\alpha_e\beta_e$.
\end{defi}

\begin{lem}
The length of the fundamental cycles do not depend on the vertex and equals the number of arrows $n_D$ of $Q_D$.
\end{lem}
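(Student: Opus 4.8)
The plan is to prove that for every crossing $e$ the two walks $\alpha_e$ and $\beta_e$ of Remark \ref{fund} are complementary arcs of the knot, in the sense that together they traverse each segment of $D$ exactly once. Granting this, the length of $\gamma_e^+=\beta_e\alpha_e$ and of $\gamma_e^-=\alpha_e\beta_e$ is in both cases $|\alpha_e|+|\beta_e|$, which is the total number of segments of $D$; since by the definition of $Q_D$ each segment contributes exactly one arrow, this number is $n_D$, and it visibly does not depend on the chosen vertex $e$.

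First I would fix $e$ and name the four segments incident to it according to the two passages of the knot through the crossing: let $p$ and $q$ be the segments by which the over-strand respectively enters and leaves $e$, and let $r$ and $s$ be the segments by which the under-strand respectively enters and leaves $e$. Comparing with the sign conventions in the definition of $Q_D$, the arrow $q$ has positive source $e$, the arrow $s$ has negative source $e$, the arrow $p$ has positive target $e$, and the arrow $r$ has negative target $e$; so in the notation of Remark \ref{fund} the walk $\alpha_e$ is the one that begins with $q$ and ends with $r$, and $\beta_e$ is the one that begins with $s$ and ends with $p$.

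Next I would use that the knot is a single closed curve: following the orientation of $D$ arranges all segments of $D$ in one cyclic sequence, in which $q$ immediately follows $p$ (the over-passage at $e$) and $s$ immediately follows $r$ (the under-passage at $e$). Reading this cyclic sequence starting from $q$, the slot immediately before $q$ is occupied by $p$, so $p$ is the very last segment before the sequence closes up, while $r$ and $s$ occupy two consecutive slots strictly before $p$. Hence, starting from $q$, the oriented diagram first returns to $e$ along $r$ --- that entire stretch being $\alpha_e$ --- then leaves $e$ along $s$ and eventually returns along $p$ --- that stretch being $\beta_e$ --- and then closes up at $q$. Thus $\alpha_e$ and $\beta_e$ are exactly the two complementary arcs into which $e$ cuts the cyclic sequence; they partition the segments of $D$, so $|\alpha_e|+|\beta_e|$ equals the number of segments, that is $n_D$.

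I do not expect any serious obstacle here; the only point needing care is the bookkeeping at $e$ --- matching the four segment-ends and their signs with the definitions in Remark \ref{fund}, and recording correctly that the single-component hypothesis forces the two passages through $e$ to appear in the global cyclic order as the adjacent pairs $(p,q)$ and $(r,s)$. A minor degenerate case worth checking separately is a diagram with a single crossing, where some of $p,q,r,s$ coincide as segments, but the counting still yields $|\alpha_e|+|\beta_e|=n_D$.
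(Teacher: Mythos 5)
Your proof is correct. The paper states this lemma without any proof (it is treated as evident), and your argument --- that $\alpha_e$ and $\beta_e$ are the two complementary arcs into which the single closed oriented curve is cut at the crossing $e$, so together they traverse each segment exactly once and $|\alpha_e|+|\beta_e|$ equals the number of segments, i.e.\ $n_D$ --- is exactly the natural justification the author leaves implicit, including the correct matching of the four segment-ends at $e$ with the sign conventions of Remark~\ref{fund}.
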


Let $\tau : Q_0\to k^\bullet$ be a map. For instance $\tau(e)=q^{l(e)}$ where $q\in k^\bullet$ and $l:Q_0\to \mathbb{Z}$ is some  map
 given for example by the length of $\alpha_e$.
We consider a  two-sided ideal $I_\tau$ of $kQ_D$  generated by two kind of relations as follows.\begin{itemize}
\item
Type I : Let $ba$ be a path of length $2$ in $Q_D$ and let $e=t(a)=s(b)$ be its middle vertex.
 In case the signs at $e$ of $a$ and $b$ are different (that is if $ba$ is not a follow-up of two segments of the diagram) then $ba$ is a generator.
  This way each vertex of the quiver provides  two generators.
\item
Type II :
The elements $\alpha_e\beta_e - {\tau(e)} \beta_e\alpha_e$ for all the vertices $e$.
\end{itemize}

\begin{defi}
The \textbf{algebra of the diagram} $D$ with respect to $\tau$ is $\Lambda_{D,\tau} = kQ_D/I_\tau$.
\end{defi}

\begin{pro} \label{basis}
The algebra $\Lambda_{D,\tau}$ is finite dimensional.
A basis is given by the positive fundamental cycles and the non-zero paths of length strictly less than $n_D$, that is the paths made by following-up
the segments and having strictly less than $n_D$ segments.

\end{pro}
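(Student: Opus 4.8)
The plan is to split the construction into its monomial part and the single binomial family. First I would set $R=kQ_D/I$, where $I$ is the two-sided ideal generated by the Type~I relations alone. Since $I$ is monomial, with generators the length-$2$ paths that fail to be follow-ups of two segments, the surviving basis of $R$ consists exactly of the trivial paths and the follow-up paths (those tracing the oriented diagram). Grading everything by path length, $I$ is homogeneous, $R_0=kQ_0$, and for every $\ell\ge 1$ a follow-up path of length $\ell$ is determined by its first arrow, so $\dim_k R_\ell=n_D$ and $R$ is infinite dimensional. The crucial structural fact I would isolate here is that multiplication in $R$ is ``deterministic'': for a follow-up path $p$ ending at a vertex $v$ and an arrow $a$ with $s(a)=v$, the product $ap$ is the follow-up path of length one more when $a$ is the unique arrow continuing $p$ along the diagram, and $ap=0$ otherwise; hence a product of two non-trivial follow-up paths is either $0$ or the follow-up path of the combined length.

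Next, $\Lambda_{D,\tau}=R/J$ with $J$ the two-sided ideal of $R$ generated by the images of $g_e:=\alpha_e\beta_e-\tau(e)\beta_e\alpha_e=\gamma_e^--\tau(e)\gamma_e^+$. By the preceding Lemma each $g_e$ is homogeneous of degree $n_D$, hence $J$ is homogeneous and lives in degrees $\ge n_D$; so $\Lambda_{D,\tau}$ agrees with $R$ in all degrees $<n_D$, already accounting for the trivial paths and the follow-up paths of lengths $1$ through $n_D-1$ in the claimed basis.

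The core computation is $J$ in degrees $\ge n_D$. In degree $n_D$, positivity of the grading forces $J_{n_D}$ to be spanned by the $u\,g_e\,v$ with $u,v$ trivial; since $\gamma_e^\pm$ are loops at $e$ this reduces to $J_{n_D}=\sum_{e\in Q_0}k\,g_e$. Now $R_{n_D}$ has as basis the length-$n_D$ follow-up loops, of which there are exactly two at each vertex $e$ — one starting with the positive-source arrow, which is $\gamma_e^+$, one starting with the negative-source arrow, which is $\gamma_e^-$ — so $\{\gamma_e^+\}_e\cup\{\gamma_e^-\}_e$ is that basis. Writing $g_e=\gamma_e^--\tau(e)\gamma_e^+$ one checks immediately that $R_{n_D}=\big(\bigoplus_{e}k\,\gamma_e^+\big)\oplus J_{n_D}$, so the classes of the positive fundamental cycles form a basis of $\Lambda_{n_D}$. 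For $\ell>n_D$, given a length-$\ell$ follow-up path $\pi$ based at a vertex $e$, let $u$ be its follow-up prefix of length $\ell-n_D\ge 1$ and let $\varepsilon$ be the sign at $e$ of the first arrow of $\pi$; by the deterministic multiplication rule $u\,\gamma_e^{\varepsilon}=\pi$ while $u\,\gamma_e^{-\varepsilon}=0$, hence $u\,g_e$ is a nonzero scalar multiple of $\pi$ and $\pi\in J$. Therefore $J_\ell=R_\ell$ and $\Lambda_\ell=0$ for $\ell>n_D$.

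Putting the graded pieces together, $\Lambda_{D,\tau}=\bigoplus_{\ell=0}^{n_D}\Lambda_\ell$ is finite dimensional and the trivial paths, the follow-up paths of length $1$ through $n_D-1$, and the positive fundamental cycles $\gamma_e^+$ form a basis, as claimed. I expect the delicate step to be the degree-$n_D$ bookkeeping: one must use that the Type~II generators are genuinely homogeneous \emph{of length $n_D$} — this is exactly the earlier Lemma, and it is what keeps the lower degrees untouched and confines the interaction to the loops — together with the observation that each crossing contributes precisely one negative-source arrow, which makes the $g_e$ independent inside $R_{n_D}$ and simultaneously singles out the \emph{positive} fundamental cycles as the surviving representatives.
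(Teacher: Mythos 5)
Your argument is correct, and its engine is the same one-line observation the paper uses: if $\pi$ is a nonzero path of length $\ell>n_D$, its initial length-$n_D$ portion is the fundamental cycle $\gamma_e^{\varepsilon}$ matching the sign of the first arrow, and the Type~II relation trades $\gamma_e^{\varepsilon}$ for a unit multiple of $\gamma_e^{-\varepsilon}$, whose continuation is killed by a Type~I relation. Where you go beyond the paper is in the packaging: the paper's proof consists only of this vanishing observation, which yields finite-dimensionality and the fact that the listed elements span, but says nothing about their linear independence; by filtering through the length grading, noting that the Type~I ideal is monomial with the follow-up paths as surviving basis (exactly $n_D$ in each positive degree, by the deterministic continuation rule), and checking that in degree $n_D$ the generators $g_e=\gamma_e^--\tau(e)\gamma_e^+$ span a complement of $\bigoplus_e k\,\gamma_e^+$ inside the span of $\{\gamma_e^+,\gamma_e^-\}_e$, you actually prove the basis statement in full. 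The only blemish is terminological: in the step for $\ell>n_D$ you call $u$ the ``prefix'' of $\pi$ of length $\ell-n_D$, whereas in the factorization $\pi=u\,\gamma_e^{\varepsilon}$ (with $\gamma_e^{\varepsilon}$ traversed first, as in the paper's $\delta=a\gamma$) the factor $u$ is the terminal portion of the walk; the displayed identity is the correct one and the argument is unaffected.
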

\begin{proof}
The proof relies on the  observation that a path  $\delta$ of length $n_D+1$ is zero in the algebra.
Indeed  let $\delta = a\gamma$ where $\gamma$ is a fundamental cycle.
If the source sign of $a$ is not the same than the target sign of $\gamma$ then $\delta\in I_\tau$.
Otherwise let $\gamma'$ be the other fundamental cycle, then
$a\gamma$ and $a\gamma'$ are equal up to a non-zero element of $k$ and  the latter is in the ideal.
\end{proof}

\begin{rem}
The number of vertices of $Q$ change through the first and second Reidemeister moves.
\end{rem}

Let $J_D$ be the two-sided ideal generated by the relations of type I and type II', where type II' is the
set of all  paths of length $n_D+1$.
Instead of $\Lambda_{D,\tau}$ a monomial algebra $\Xi_D =kQ_D/J_D$ can be considered which we call the
\textbf{monomial algebra of the diagram}. A larger basis than before is provided by all the non-zero paths of length strictly less than $n_D+1$.

\begin{exa}
Let $D$ be the diagram of the trivial knot with one crossing. Then the algebra $\Lambda_D,\tau$ is $k\{a,b\}/<ab-qba>$ for $q\in k^\bullet$.
This algebra provides the first example for a negative answer to Happel's question,
see \cite{bugrmaso}. More precisely its global dimension is infinite but for $q$ not a root of unity it has zero
Hochschild cohomology in degrees large enough (in fact starting at degree 3). Nevertheless this algebra  verifies Han's conjecture, see
\cite{H}, namely its Hochschild homology is non zero in arbitrarily large degrees (in fact in all degrees).
\end{exa}

\section{Properties}
We recall first the definition of a family of algebras which arose in representation theory of finite dimensional algebras.

\begin{defi}
Let $Q$ be a quiver and $I$ a two sided admissible ideal generated by a set of relation  $\rho$. Then $(Q, \rho)$ is called {\textbf{special biserial}}
(see for instance  \cite{ri})  if it verifies the following conditions:
\begin{enumerate}
 \item Any vertex of $Q$ is the source of at most two arrows and is the target of at most two arrows.
 \item If two different arrows $c$ and $d$ start at the target of an arrow $a$ then at least one of the paths $ca$ or $da$ is in $\rho$.
 \item If two different arrows $a$ and $b$ end at the source of an arrow $c$ then at least one of the two paths $ca$ or $cb$ is in $\rho$
\end{enumerate}

\end{defi}

As mentioned by C.M. Ringel in \cite{ri} special biserial algebras were first considered by I.M. Gelfand and V.A.  Ponomarev in \cite{gepo}.
Blocks of a group algebra with cyclic or dihedral defect group are special biserial.
As a consequence of \cite{gepo} special biserial algebras are of tame representation type, in other words their indecomposable modules can be classified,
 see also \cite{wawa,dosk}. Precise conditions are given in \cite{assembustamalemeur}
 for the vanishing of the first Hochschild cohomology of a special biserial algebras (which in turn implies that the cohomology in degrees larger than $1$
 also vanishes).

 The following result is immediate:

 \begin{pro}
 The algebra (or the monomial algebra) of the diagram of a knot is special biserial.
 \end{pro}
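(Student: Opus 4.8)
The plan is to read conditions (1)--(3) of the definition directly off the local picture at a crossing, so that the proof reduces to a bookkeeping of signs. First I would check that the ideal is admissible: the type I relations are paths of length $2$ and the type II relations are differences of paths of length $n_D\geq 2$, so $I_\tau\subseteq F^2$, while by Proposition \ref{basis} every path of length $n_D+1$ lies in $I_\tau$, hence $F^{n_D+1}\subseteq I_\tau$; the same holds for $J_D$, where the type II$'$ relations are exactly the paths of length $n_D+1$. So in each case one has a presentation by an admissible ideal, with distinguished generating set $\rho$ given by the type I relations together with the type II (resp. type II$'$) relations.

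Next I would use the local structure of $Q_D$. At a crossing $e$ exactly two strands meet, an over-strand and an under-strand, and each of them contributes one segment arriving at $e$ and one segment leaving $e$. Hence $e$ is the target of exactly two arrows, say $a_e^+$ and $a_e^-$ according as the arriving segment ends by over- or by under-crossing (that is, according as the target sign at $e$ is positive or negative), and the source of exactly two arrows $b_e^+$ and $b_e^-$ with source sign at $e$ positive and negative respectively. A segment running from $e$ back to $e$ yields a loop, counted once as incoming and once as outgoing, with no effect on this count. This gives condition (1), and moreover $a_e^+\neq a_e^-$ and $b_e^+\neq b_e^-$ since every arrow carries a well-defined source sign and a well-defined target sign.

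Then I would invoke that, by construction, a path $ba$ of length $2$ with middle vertex $e$ is a type I generator precisely when the target sign of $a$ at $e$ differs from the source sign of $b$ at $e$; equivalently, the only length-$2$ paths through $e$ that are not in $\rho$ are the two pass-throughs $b_e^+a_e^+$ and $b_e^-a_e^-$ along a strand. Conditions (2) and (3) are then immediate. For (2): if an arrow $a$ ends at $e$, then among the two arrows $b_e^+,b_e^-$ leaving $e$ the one whose source sign differs from the target sign of $a$ composes with $a$ into a sign-mismatched, hence type I, relation; so of any two distinct arrows leaving $e$, composing at least one of them with $a$ lands in $\rho$. Condition (3) is the mirror statement for an arrow $c$ leaving $e$, using the one among $a_e^+,a_e^-$ whose target sign differs from the source sign of $c$. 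Since (2) and (3) concern only paths of length $2$, the type II (resp. type II$'$) relations play no role, so the argument proves the statement for $\Lambda_{D,\tau}$ and for the monomial algebra $\Xi_D$ alike.

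I do not expect a genuinely hard step here; the only thing demanding care --- the ``main obstacle'', such as it is --- is to keep the two sign conventions apart (the sign of the source of an arrow versus the sign of its target, and the fact that at the middle vertex $e$ of $ba$ it is the target sign of $a$ and the source sign of $b$ that are compared), and to check that loops and possibly-coinciding incoming/outgoing arrows at a crossing do not disturb the degree count underlying condition (1).
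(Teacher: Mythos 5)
Your argument is correct and is precisely the verification the paper leaves implicit when it calls the result ``immediate'': condition (1) from the two strands through each crossing, and conditions (2)--(3) from the fact that the type I relations are exactly the sign-mismatched length-two paths, leaving only the two strand-following compositions at each vertex. The admissibility check and the remark that the type II relations are irrelevant to (2)--(3) are sensible additions, and nothing is missing.
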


 We recall that an algebra is \textbf{self-injective} if it admits a non degenerate bilinear
 form $\beta : \Lambda\times\Lambda\to k$ which is associative, that is
  $\beta(xy,z)=\beta(x,yz)$ for any triple of elements $(x,y,z)$ in $\Lambda$. Associated to $\beta$ there is a linear map $t:\Lambda\to k$
  given by $t(x)=\beta(x,1)$ which is a
  free generator of the left $\Lambda$-module $\Hom_k(\Lambda, k)$.

  \begin{thm}
  Algebras of diagrams of knots are self-injective.
  \end{thm}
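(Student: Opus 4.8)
The strategy is to exhibit an explicit associative non-degenerate bilinear form $\beta$ on $\Lambda_{D,\tau}$, equivalently a symmetrizing-type linear functional $t:\Lambda_{D,\tau}\to k$ whose induced pairing $\beta(x,y)=t(xy)$ is non-degenerate. Since by Proposition \ref{basis} the algebra has an explicit basis — the positive fundamental cycles $\gamma_e^+$ (one per vertex $e$) together with the non-zero proper "segment paths" of length $<n_D$ — the socle structure is visible, and one can read off a candidate $t$ from it. First I would identify the socle of $\Lambda_{D,\tau}$ as a left (and right) module: the claim I expect is that $\soc \Lambda_{D,\tau}$ is spanned by the fundamental cycles $\{\gamma_e^+\}_{e\in (Q_D)_0}$, each sitting in $e\Lambda e$, and that multiplying any basis path of positive length by an appropriate complementary path lands (up to a nonzero scalar coming from the $\tau$'s) on a fundamental cycle. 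Concretely, a non-zero segment-path $p$ from $e$ to $f$ of length $\ell<n_D$ completes, by continuing to follow the segments around the diagram, to the full positive cycle at $f$ (or at $e$), so there is a path $p'$ with $p'p$ a fundamental cycle.

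Next I would define $t$ on the basis by $t(\gamma_e^+)=1$ for every vertex $e$ and $t=0$ on all other basis elements (all vertices and all proper segment-paths), and extend linearly. The required verification is that the bilinear form $\beta(x,y)=t(xy)$ is non-degenerate, i.e. for each nonzero $x$ there is $y$ with $t(xy)\neq 0$. By linearity it suffices to check this for $x$ running over a basis, and the key computation is: for a basis path $p:e\to f$, the product $p q$ for $q$ a basis path is either zero, or a proper segment-path, or a nonzero scalar multiple of a fundamental cycle; and one can always choose $q$ so that $pq$ is exactly a scalar multiple of $\gamma^+$. Here the Type II relation $\alpha_e\beta_e=\tau(e)\beta_e\alpha_e$ is what lets one pass freely between $\gamma_e^+$ and $\gamma_e^-$ at the cost of an invertible scalar, which is exactly the mechanism already used in the proof of Proposition \ref{basis}. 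For the idempotents $x=e$ themselves, take $q=\gamma_e^+$; for a fundamental cycle $x=\gamma_e^+$, take $q=e$. This gives surjectivity of $y\mapsto t(xy)$ hence non-degeneracy on one side, and the symmetric argument (using that $\gamma_e^+$ is also in the right socle and that every segment-path is a left-factor of a fundamental cycle, via $\gamma_e^- = \alpha_e\beta_e$ and the relation) handles the other side; in fact the pairing will turn out to be a perfect pairing between $e\Lambda f$ and $f\Lambda e$.

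Then I would record the consequence that $t$ is a free generator of the left module $\Hom_k(\Lambda_{D,\tau},k)$: since $\beta$ is non-degenerate the map $\Lambda_{D,\tau}\to\Hom_k(\Lambda_{D,\tau},k)$, $x\mapsto \big(y\mapsto t(xy)\big)$ is a $\Lambda$-linear isomorphism, so $\Hom_k(\Lambda_{D,\tau},k)$ is free of rank one, i.e. $\Lambda_{D,\tau}$ is self-injective. The identical argument applies verbatim to the monomial algebra $\Xi_D=kQ_D/J_D$ using its basis of non-zero paths of length $<n_D+1$, with $t$ taking value $1$ on the (monomial) top-length paths through each vertex and $0$ elsewhere; there the passage between cycles is not needed since the "type II$'$" relations already kill everything in length $n_D+1$.

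**Main obstacle.** The delicate point is not the definition of $t$ but the precise bookkeeping of the socle: one must verify that $\dim e\Lambda_{D,\tau} e = 2$ for every vertex $e$ (spanned by $e$ and $\gamma_e^+$, with $\gamma_e^-$ a scalar multiple of $\gamma_e^+$), that each space $e\Lambda_{D,\tau} f$ for $e\neq f$ is spanned by the two segment-paths running between them in the two possible ways and that exactly one of those two length-sums equals $n_D$, and hence that every nonzero basis element has a "complement" producing a fundamental cycle and no shorter basis path produces one accidentally with the wrong scalar. This is a finite but careful case analysis on how arcs and segments of the diagram string together, driven by the sign conditions defining the Type I relations; once it is in place, non-degeneracy of $\beta$ is immediate from $t(\gamma_e^+)=1$.
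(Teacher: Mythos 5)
Your proposal is essentially the paper's proof: the form you build from $t(\gamma_e^+)=1$, $t=0$ on all other basis elements, and $\beta(x,y)=t(xy)$ coincides with the paper's $\beta$, which pairs each basis path $\delta$ with the unique complement $\delta'$ satisfying $\delta'\delta=\gamma_e^{\epsilon}$ and takes the value $1$ or $\tau(e)$ according to the sign of the source of $\delta$; your packaging via the trace $t$ makes associativity automatic and shifts the work to non-degeneracy, while the paper defines $\beta$ directly on basis pairs, checks associativity using the type II relation $\gamma_e^-=\tau(e)\gamma_e^+$ (exactly the mechanism you invoke), and gets non-degeneracy for free because the Gram matrix is a permutation matrix up to nonzero scalars. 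The one slip is in your final bookkeeping paragraph: $e\Lambda_{D,\tau}e$ is $4$-dimensional (spanned by $e$, $\alpha_e$, $\beta_e$, $\gamma_e^+$) and $e\Lambda_{D,\tau}f$ for $e\neq f$ is spanned by four segment-paths rather than two, since each of the two length-$n_D$ cycles out of $e$ passes through every other crossing twice; this does not damage the argument, because non-degeneracy only needs that $\delta\mapsto\delta'$ is a bijection of the basis, which it is.
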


\begin{proof}
Let $\delta$ be a positive length basis path  of $\Lambda_{D,\tau}$  (according to Lemma \ref{basis}) with source vertex $e$. We define  $\delta'$ to be
the path such that
$\delta'\delta =\gamma_e^\epsilon$, where $\gamma_e^\epsilon$ is the fundamental cycle and  $\epsilon$ is the sign of $e$ at the first arrow of $\delta$.
Note that $\left(\gamma^+_e\right)'=e$. Moreover for each vertex $e$ we put  $e'=\gamma_e^+$.

In case $\delta_1$ and $\delta_2$ are basis paths such that  $\delta_2\neq\delta_1'$ we put  $\beta(\delta_2,\delta_1)=0$.

If  $\delta_2 = \delta_1'$ we consider two cases :
\begin{enumerate}
\item In case $\delta_1$ is a vertex or if the source of the first arrow of $\delta_1$ is positive, then $\beta(\delta_1', \delta_1)=1$.
\item If  the sign of the source $e$ of the first arrow of $\delta_1$ is negative, then $\beta(\delta_1', \delta_1)=\tau(e)$.
\end{enumerate}

The only difficulty for verifying  that $\beta$ is associative arises when $\delta_1$ has a first arrow with negative source $e$. We need to prove that
$$\beta(\delta_1', \delta_1)=\beta(\delta_1'\delta_1, e).$$
We have defined $\beta(\delta_1', \delta_1)=\tau(e)$ while
$$\beta(\delta_1'\delta_1, e)=\beta(\gamma_e^-,e)=\beta (\tau(e)\gamma_e^+,e)=\tau(e)\beta(\gamma_e^+,e)=\tau(e)$$
There is no difficulty for showing that $\beta$ is non-degenerated.
\end{proof}

\begin{rem}
The class of special biserial self-injective algebras has been studied by K. Erdmann and A. Skowro\'nski with respect to Euclidian components
of the stable Auslander-Reiten quiver, see \cite{ersk}. See also the work by  Z. Pogorzaly \cite{po} concerning stable equivalence of this class of algebras.
Precise computations of the Hochschild cohomology of certain self-injective special biserial algebras are performed in \cite{snta}.
\end{rem}

\section{Gradings}

The fundamental group \emph{\`{a} la Grothendieck} of a $k$-category has been considered in
\cite{CRS ANT 10, CRS DOC 11, CRS ART 12, CRS PAMS 12, CRS 12}.
Previously a fundamental group depending on a presentation by a quiver with relations has been studied in relation with
representation theory, see for instance \cite{boga,MP,bu1,buca,le,le1,le2}. The main tool for the theory of the intrinsic fundamental
group are the connected gradings as follows.

\begin{defi}
Let $\B$ be a small $k$-category. A \textbf{grading} $X$ of $\B$ with structural group $\Gamma(X)$ is firstly a direct sum decomposition of each vector space
 of morphisms indexed by elements of $\Gamma(X)$ -- a direct summand with index $g$ of this decomposition is
 called a homogeneous component of degree $g$,
 and a non zero morphism in this component is said to be homogeneous of degree $g$.  Secondly the composition
 of two homogeneous morphisms is homogeneous with degree the product of the degrees.
 \end{defi}

The precise definition of homogeneous walks is given for instance in \cite{CRS 12}. Roughly each homogeneous
 morphism $\varphi$ of degree $g$ provides a virtual one $(\varphi,-1)$ with reversed source and target vertices and of settled degree $g^{-1}$.
 A \textbf{homogeneous walk} is a sequence
of concatenated (virtual or not) homogeneous morphisms; its degree is the product of the degrees.

The grading $X$ is \textbf{connected} if between two fixed objects and for any element of $g\in\Gamma(X)$ there exist a homogenous walk
relying the objects having degree $g$. In this case the smash product provides a connected category which is a Galois covering of $\B$, see \cite{CM}.

The fundamental group is obtained by considering all the connected gradings of $\B$ and coherent families of elements of the structure groups with
respect to morphisms of gradings, see \cite{CRS 12}.

We recall the following convention: a crossing of $D$ is positive if following the diagram according to the orientation  the under line
of the diagram goes from right to left. It is negative otherwise.

 Let $D$ be the diagram of an oriented knot. Recall that the fundamental group of the complement of the knot
  with base point at the infinity is generated by
 the loops which passes just under each portion of the knot corresponding to an arc of the projection.

 \begin{defi}
 Let $Q_D$ be the diagram of a knot. The grading of $Q_D$ by the fundamental group of the knot
 is as follows: given an arrow, consider the corresponding segment
 and the arc to which it belongs.
 In case the crossings at the vertices of the arrow have the same sign, the degree of the arrow is the generator of the fundamental group
 corresponding to the arc. Otherwise the degree is trivial.
 \end{defi}

The verification of the following result is not difficult using the Wirtinger presentation of the fundamental group of the knot.

 \begin{exa}
 For the usual diagrams of the trefoils knots or of the figure-height knot the two-sided ideal $I_\tau$ is homogeneous
 and the resulting grading for the algebra of the diagram  is connected.
\end{exa}

Nevertheless it seems that for the diagram of the  knot  6\_\ 3 (see the for instance the Knot Atlas) the ideal is not homogeneous.


\footnotesize
\noindent
\\Institut de math\'{e}matiques et de mod\'{e}lisation de Montpellier I3M,\\
UMR 5149\\
Universit\'{e}  Montpellier 2,
\\F-34095 Montpellier cedex 5,
France.\\
{\tt Claude.Cibils@math.univ-montp2.fr}

\end{document}